\documentclass[10pt]{article}
\usepackage[utf8]{inputenc}

\usepackage{fullpage}

\usepackage[linesnumbered,ruled]{algorithm2e}

\usepackage[dvipsnames]{xcolor}
\usepackage{todonotes}
\usepackage{xspace}
\usepackage{complexity}
\usepackage[linesnumbered,ruled]{algorithm2e}
\usepackage{tikz}
\usepackage{amsmath}
\usepackage{amsthm}
\usepackage{amssymb}
\usepackage{doi}
\usepackage{authblk}
\usepackage{subcaption}
\usepackage{enumitem}
\usepackage[capitalise]{cleveref}


\newtheorem{theorem}{Theorem}

\newtheorem{conjecture}{Conjecture}

\newtheorem{lemma}[theorem]{Lemma}

\title{A Note on the Complexity of Graph Recoloring\footnote{The author is supported by ANR project GrR (ANR-18-CE40-0032).}}

\author[1]{Nicolas Bousquet}
\affil[1]{Univ. Lyon, CNRS, Université Lyon 1, LIRIS UMR CNRS 5205, F-69621, Lyon, France}
\date{\vspace{-20pt}}

\begin{document}

\maketitle


\begin{abstract}
We say that a graph is $k$-mixing if it is possible to transform any $k$-coloring into any other via a sequence of single vertex recolorings keeping a proper coloring all along. Cereceda, van den Heuvel and Johnson proved that deciding if a graph is $3$-mixing is co-NP-complete and left open the case $k \ge 4$. 
We prove that for every $k \ge 4$, $k$-mixing is co-NP-hard.
\end{abstract}


Let $G$ be a graph. All the colorings considered in this paper are proper (that is adjacent vertices receive distinct colors). We say that two colorings are \emph{adjacent} if they differ on exactly one vertex. The \emph{configuration graph of the $k$-colorings} of a $k$-colorable graph $G$ is the graph whose vertices are $k$-colorings of $G$ with the adjacency defined above. A graph $G$ is \emph{$k$-mixing} if its $k$-configuration graph of the $k$-colorings of $G$ is connected. In the \textsc{$k$-Mixing} problem, we are given an integer $k$ and a $k$-colorable graph $G$ and we want to decide if $G$ is $k$-mixing.
Cereceda, van den Heuvel and Johnson proved in~\cite{CerecedaHJ09} that the following holds:

\begin{theorem}[Cereceda, van den Heuvel, Johnson~\cite{CerecedaHJ09} ]\label{thm:3mixing}
\textsc{$3$-Mixing} is co-NP-complete.
\end{theorem}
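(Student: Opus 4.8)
The statement asks for two things, membership in co-NP and co-NP-hardness, and I would prove them separately. For membership, the plan is to reduce it to a polynomial-time reachability test. Note that $G$ fails to be $3$-mixing exactly when there exist two proper $3$-colorings $\alpha,\beta$ lying in distinct components of the configuration graph; hence the complement of \textsc{$3$-Mixing} is in NP as soon as one can verify in polynomial time that a guessed pair $(\alpha,\beta)$ is \emph{unreachable}. The tool I would develop is the winding-number invariant: fixing a reference orientation and identifying the colors with $\mathbb{Z}/3\mathbb{Z}$, each proper coloring assigns to every oriented edge a value in $\{+1,-1\}$ (the color increment modulo $3$), and the integer sum along any oriented cycle is a multiple of $3$; dividing by $3$ gives a ``winding number'' that is invariant under single-vertex recolorings. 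First I would check this invariance, so that equality of winding numbers on a fundamental cycle basis (only polynomially many cycles) is a \emph{necessary} condition for reachability.

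The delicate point is that this condition is not sufficient: on a triangle all six $3$-colorings are frozen and pairwise unreachable even though their winding numbers coincide. I would therefore augment the winding-number test with a polynomial-time analysis of frozen (non-recolorable) vertices, iteratively locating vertices that cannot move, contracting the rigid parts, and recursing on the remainder; the claim to establish is that equal winding numbers together with the absence of such a rigid obstruction certifies reachability exactly. Granting a polynomial-time reachability test, guessing $(\alpha,\beta)$ and running it places the complement in NP, so \textsc{$3$-Mixing} lies in co-NP.

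For co-NP-hardness I would give a polynomial reduction from an NP-complete problem (a SAT-like constraint problem), building from an instance $I$ a $3$-colorable graph $G_I$ such that $I$ is a yes-instance if and only if $G_I$ is \emph{not} $3$-mixing. A solution to $I$ should be encoded as an obstruction to mixing, and absence of a solution should force global connectivity. The natural obstruction to engineer is rigidity: a frozen coloring is an isolated vertex of the configuration graph, so if $G_I$ always admits at least two colorings and a solution of $I$ yields a frozen (or winding-number-pinned) coloring, then $G_I$ is not $3$-mixing. I would design variable- and clause-gadgets glued along a rigid frame so that consistent satisfying assignments correspond to frozen colorings, while any unsatisfied clause always leaves some vertex recolorable.

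The hard part will be the converse, the ``mixing'' direction: showing that when $I$ has no solution, \emph{every} pair of proper $3$-colorings of $G_I$ can be joined by single-vertex recolorings. This is a global argument rather than a gadget-local check: for an arbitrary coloring one must exhibit a reconfiguration sequence driving it to a canonical coloring while simultaneously respecting the constraints of all gadgets and the winding-number invariant around every cycle of the frame, ensuring that no cycle ever gets pinned and no subgraph becomes frozen unless $I$ is solved. Tuning the construction so these interacting constraints cooperate is the technical heart of the proof; together with the co-NP membership above it yields co-NP-completeness.
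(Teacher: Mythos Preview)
The paper does not prove this theorem at all: it is stated as a result of Cereceda, van den Heuvel and Johnson and simply cited to~\cite{CerecedaHJ09}. There is therefore no ``paper's own proof'' to compare your proposal against; the present note uses Theorem~\ref{thm:3mixing} as a black box (via Lemma~\ref{lem:3to2}) in order to establish Theorem~\ref{thm:4+mixing}.

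That said, your sketch is broadly in the spirit of the argument in the cited reference. The edge-increment/winding-number invariant you describe is exactly the ``weight'' used in~\cite{CerecedaHJ09}, and it is indeed the key to a polynomial-time reachability test between two \emph{given} $3$-colorings, which is what places the problem in co-NP. However, what you have written is a plan rather than a proof: for membership you acknowledge that equal winding numbers are not sufficient and propose to ``augment'' the test with an iterative frozen-vertex analysis, but you do not specify this procedure or argue its correctness; for hardness you outline a SAT-style gadget reduction but explicitly defer the entire converse (``mixing'') direction as ``the technical heart of the proof.'' So even setting aside the fact that the paper contains nothing to compare with, your proposal has genuine gaps at both the membership and the hardness steps.
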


They also proved that a $3$-chromatic graph\footnote{A graph is $k$-chromatic if it is $k$-colorable and is not $(k-1)$-colorable.} is never $3$-mixing.
Surprisingly, for any $k \ge 4$, the complexity status of $k$-mixing for every $k \ge 4$ is open. Neither the fact that it belongs to co-NP or the co-NP hardness has been proven.These problems were mentionned as open problems for instance in~\cite{BrewsterM23,Cereceda,Heuvel13,PhD_Heinrich}. We propose a very simple proof that the problem is co-NP hard for every $k \ge 4$:

\begin{theorem}\label{thm:4+mixing}
For every $k \ge 4$, \textsc{$k$-Mixing} is co-NP-hard (even on $(k-1)$-colorable graphs where all the color classes but at most $2$ are reduced to a single vertex).
\end{theorem}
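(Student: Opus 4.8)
The plan is to reduce from \textsc{$3$-Mixing}, which is co-NP-complete by \cref{thm:3mixing}. Since a $3$-chromatic graph is never $3$-mixing, among $3$-colorable inputs the yes-instances are bipartite, and it is on a bipartite graph $H$ that I would run the reduction (I comment at the end on dropping this restriction). Fix $k\ge 4$ and let $G$ be obtained from $H$ by adding $k-3$ new vertices $c_1,\dots,c_{k-3}$, each made adjacent to all of $H$ and to one another. Then $\chi(G)=\chi(H)+(k-3)=k-1$, and in a proper $(k-1)$-coloring of $G$ the color classes are the two sides of $H$ together with the singletons $\{c_1\},\dots,\{c_{k-3}\}$, so $G$ belongs to the class described in \cref{thm:4+mixing}. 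It then suffices to prove that $G$ is $k$-mixing if and only if $H$ is $3$-mixing; the construction being polynomial, this gives the theorem.

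The argument rests on a single observation about the $k$-colorings of $G$: the vertices $c_1,\dots,c_{k-3}$ always occupy a set $S$ of exactly $k-3$ colors, and every vertex of $H$ is colored from the remaining $3$-set $\overline S$. If the coloring of $H$ uses all three colors of $\overline S$, then each $c_i$ already sees all $k-1$ colors different from its own, so the clique $\{c_1,\dots,c_{k-3}\}$ is frozen and the vertices of $H$ can only be recolored \emph{inside} $\overline S$; in this regime a recoloring step of $G$ is nothing but a recoloring step of $H$ in the configuration graph of the $3$-colorings of $H$. If instead the coloring of $H$ uses at most two colors of $\overline S$, then each $c_i$ has a free color, so the clique colors can be rotated one at a time, which in turn alters $\overline S$ and makes new colors available to $H$.

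For the direction ``$H$ is $3$-mixing $\Rightarrow$ $G$ is $k$-mixing'', I would show that every $k$-coloring of $G$ reaches one fixed canonical coloring: first use $3$-mixing of $H$ to recolor $H$ (with the clique fixed) to a proper $2$-coloring, which exists since $H$ is bipartite; then, using the freed color, rotate the clique's color set toward $\{4,\dots,k\}$, alternately re-shifting the pair of colors used on $H$ by again invoking $3$-mixing of $H$ on the three colors then available; finally permute the $c_i$ among their colors, parking one on the free color at a time. For the converse I would prove that if $H$ is not $3$-mixing then the configuration graph of its $3$-colorings has a component $\mathcal C$ in which \emph{every} coloring uses all three colors. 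This is the heart of the matter, and it holds because any two $3$-colorings of a bipartite graph that use at most two colors are joined in that configuration graph (rename colors, and flip the bipartition of connected components, one vertex at a time), so if every component of the configuration graph contained such a coloring then the graph would be connected. Given such a $\mathcal C$, consider the $k$-colorings of $G$ with clique colors $\{4,\dots,k\}$ and $H$-part in $\mathcal C$: along any recoloring sequence an induction shows the clique stays frozen (since $H$ keeps using all of $\{1,2,3\}$) and the $H$-part stays in $\mathcal C$, so such a coloring cannot reach one whose $H$-part leaves $\mathcal C$, and $G$ is not $k$-mixing.

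I expect the converse to be the only real difficulty: a priori the $k-3$ extra colors could let the recoloring of $H$ escape its component, and the rigid component $\mathcal C$ — whose existence rests on the connectivity of all $2$-colorings of a bipartite graph in its $3$-coloring configuration graph — is precisely what forbids this; everything else is bookkeeping of single-vertex recolorings. Incidentally, the same frozen-clique argument shows directly that $G$ is never $k$-mixing already when $H$ is merely assumed $3$-chromatic (then $H$ uses all of $\overline S$ in every $k$-coloring, so $S$ can never change, and there are at least two choices of $S$ since $k\ge 4$), so the reduction works verbatim from general \textsc{$3$-Mixing} as well, at the cost only of the $(k-1)$-colorability refinement.
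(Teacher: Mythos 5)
Your proposal is correct and follows essentially the same route as the paper: the same gadget (a $(k-3)$-clique joined completely to a bipartite instance), the same frozen-clique observation for the negative direction, and a reduction of the positive direction to recoloring a $(k-1)$-clique with $k$ colors (which you carry out by an explicit rotation to a canonical coloring, where the paper contracts the two sides of $B$ and cites the known mixing of cliques). Your inline claim that all colorings of a bipartite graph using at most two colors lie in one component of the $3$-coloring configuration graph is exactly the content of the paper's \cref{lem:3to2}, which is what lets it phrase the reduction from the \textsc{$3$-To-$2$} problem rather than directly from \textsc{$3$-Mixing}.
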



To prove Theorem~\ref{thm:4+mixing}, the main idea consists in observing that Theorem~\ref{thm:3mixing} is equivalent to the following statement: deciding if any $3$-coloring of a bipartite graph $B$ can be transformed into a $2$-coloring is co-NP-complete. We will call this problem the \textsc{$3$-To-$2$} problem. The fact that the \textsc{$3$-To-$2$} problem is co-NP-complete is a direct consequence of the following lemma:

\begin{lemma}\label{lem:3to2}
A graph $G$ is $3$-mixing if and only if it is a bipartite graph and we can reach a $2$-coloring from any $3$-coloring.
\end{lemma}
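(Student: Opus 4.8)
The plan is to establish the two implications separately; the whole argument is short and elementary, and the only external ingredient is the result of Cereceda, van den Heuvel and Johnson recalled above, that a $3$-chromatic graph is never $3$-mixing. Write $\mathcal{C}$ for the configuration graph of the $3$-colorings of $G$. Suppose first that $G$ is not bipartite: then $G$ is $3$-colorable but not $2$-colorable, hence $3$-chromatic, so by the recalled result $G$ is not $3$-mixing, while on the other hand $G$ admits no proper $2$-coloring at all, so it is false that every $3$-coloring of $G$ reaches a $2$-coloring. Thus both sides of the stated equivalence are false and there is nothing to prove; from now on we assume $G$ is bipartite.

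The crux is the observation that \emph{all $2$-colorings of a bipartite graph lie in a single connected component of $\mathcal{C}$}. Since every single-vertex recoloring stays within one connected component of $G$, it is enough to prove this one component at a time, so let $H$ be a connected bipartite graph with bipartition $(X,Y)$. If $H$ is a single vertex the claim is trivial, so assume $H$ has an edge; then any proper coloring of $H$ using at most two colors is constant on $X$ and constant on $Y$, so a $2$-coloring of $H$ is described by an ordered pair $(c_X,c_Y)$ of distinct colors. Given two such colorings $(a,b)$ and $(c,d)$, pick a color $p\notin\{b,d\}$: recoloring the vertices of $X$ one at a time from $a$ to $p$ is legal since every neighbour lies in $Y$ and has color $b\neq p$, then recoloring every vertex of $Y$ to $d$ is legal since $d\neq p$, and finally recoloring every vertex of $X$ to $c$ is legal since $c\neq d$; this transforms $(a,b)$ into $(c,d)$. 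Carrying out such sequences one component at a time turns any $2$-coloring of $G$ into any other, which proves the observation.

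The two implications now follow immediately. If $G$ is $3$-mixing then $\mathcal{C}$ is connected; since $G$ is bipartite it has a proper coloring using at most two colors, which is a vertex of $\mathcal{C}$ and is therefore reachable from every $3$-coloring. Conversely, suppose every $3$-coloring of $G$ reaches some $2$-coloring, and let $c,c'$ be arbitrary $3$-colorings; pick $2$-colorings $d$ and $d'$ reachable from $c$ and $c'$ respectively. By the observation $d$ and $d'$ lie in the same component of $\mathcal{C}$, and hence so do $c$ and $c'$; since $c,c'$ were arbitrary, $\mathcal{C}$ is connected, i.e.\ $G$ is $3$-mixing.

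I do not expect a genuine obstacle: once one sees that the $2$-colorings form a connected ``hub'' of $\mathcal{C}$, the equivalence is essentially immediate. The one step that is not self-contained is the non-bipartite case, which we dispatch by citing the Cereceda--van den Heuvel--Johnson result; if one wanted to avoid that citation, that result can itself be recovered from a winding-number invariant of $3$-colorings around a fixed odd cycle, which is preserved by single-vertex recolorings but does not take the same value on all $3$-colorings (swapping two of the three colors negates it).
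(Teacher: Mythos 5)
Your proof is correct and follows essentially the same route as the paper: dispatch the non-bipartite case via the Cereceda--van den Heuvel--Johnson result, observe that the forward implication is immediate from the definition of mixing, and prove the converse by routing any two $3$-colorings through $2$-colorings, which form a single connected component of the configuration graph. The only (cosmetic) difference is in how the two $2$-colorings are connected: you work component by component using the rigidity of $2$-colorings of connected bipartite graphs, whereas the paper argues globally by first arranging the two $2$-colorings to share exactly one color and then recoloring each color class of the target in turn.
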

\begin{proof}
Since $3$-chromatic graphs are not $3$-mixing~\cite{CerecedaHJ09}, the first condition holds (and can be checked in polynomial time).

Assume now that $G$ is bipartite. If $G$ is $3$-mixing then we can indeed transform any coloring $c$ into every $2$-coloring of $G$ by definition of being mixing. 
Conversely, if any coloring $c$ of $G$ can be transformed into a $2$-coloring $c'$ then let us prove that there always exists a transformation between any pair of colorings $c_1,c_2$. By assumption, $c_1$ can be transformed into a $2$-coloring $c_1'$ with a sequence $\mathcal{S}_1$ and $c_2$ can be transformed into a $2$-coloring $c_2'$ with a sequence $\mathcal{S}_2$. Up to a recoloring of the vertices colored $2$ with $1$ or $0$, we can assume that the set of colors used in $c_1'$ is $\{1,2\}$ and in $c_2'$ is $\{ 0,1 \}$. 

Let us denote by $X_0,X_1$ the vertices colored respectively $0$ and $1$ in $c_2'$. So we can successively recolor in $c_1'$ all the vertices of $X_0$ with color $0$ and then all the vertices of $X_1$ with color $1$. So we can transform $c_1$ into $c_2'$. We append to this transformation $\mathcal{S}_2$ in the  reverse order to get transformation from $c_1$ to $c_2$.
\end{proof}

Using this lemma we can prove Theorem~\ref{thm:4+mixing}:

\begin{proof}[Proof of Theorem~\ref{thm:4+mixing}]
Let $k \ge 4$. Let us provide a reduction from \textsc{$3$-To-$2$} to \textsc{$k$-Mixing}. We can assume that the instance of \textsc{$3$-To-$2$} is a bipartite graph $B$, otherwise we immediately return a no instance. Consider the graph $G$ consisting of $B$ plus $k-3$ vertices $X$ inducing a clique which are complete to $B$. We claim that $G$ is $k$-mixing if and only if every $3$-coloring of $B$ can be transformed into a $2$-coloring. 

If one cannot reach a $2$-coloring from a $3$-coloring $c$ of $B$, then no vertex of $X$ can be recolored in any coloring reachable from $c$ (every vertex of $X$ being adjacent to the $3$-colors of $B$ plus the $k-4$ colors of the other vertices of $X$ in any coloring reachable from $c$). So $G$ is not $k$-mixing. 

Conversely, assume that any coloring $3$-coloring of $B$ can be transformed into a $2$-coloring. Let $c_1,c_2$ be two $k$-colorings of $G$. Since \textsc{$3$-To-$2$} is positive if and only if $B$ is $3$-mixing, we can reach from $c_1$ and $c_2$ two colorings $c_1'$ and $c_2'$ such that $c_1[X]=c_1'[X]$, $c_2[X]=c_2'[X]$ and, vertices of $B$ are $2$-colored in both $c_1'$ and $c_2'$ with the same color classes  denoted $B_1$ and $B_2$ (but not necessarily the same colors).

We now identify all the vertices of $B_1$ and $B_2$ in $G$. The colorings $c_1'$ and $c_2'$ are still properly defined (since all the vertices of $B_1$ and $B_2$ are colored the same in both colorings). The resulting graph is a clique on $k-1$ vertices. We can now recolor the clique on $O(k)$ steps (see e.g. in~\cite{BonamyB18}) and this recoloring can be transformed into a recoloring from $c_1'$ to $c_2'$ in $O(n)$ steps as observed for instance in~\cite{BonamyB18}. 

\end{proof}

We conclude the paper with the following conjecture:

\begin{conjecture}[Cereceda, van den Heuvel, Johnson~\cite{CerecedaHJ09}]\label{pspace}
For every $k \ge 4$, $k$-\textsc{Mixing} is PSPACE-complete.
\end{conjecture}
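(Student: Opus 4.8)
Toward Conjecture~\ref{pspace} we sketch the route we would take and isolate the step we expect to be the main obstacle. Membership in PSPACE is the easy direction: the configuration graph of the $k$-colorings of $G$ has at most $k^n$ vertices, each encodable in polynomial space, and whether two given $k$-colorings lie in the same component can be decided in polynomial space by the standard Savitch-style midpoint recursion. A polynomial-space machine can therefore cycle through all pairs $(c_1,c_2)$ of $k$-colorings of $G$, reusing its workspace on the reachability test, and accept if and only if every pair is connected. Hence \textsc{$k$-Mixing} is in PSPACE for every $k$, and the content of the conjecture is the PSPACE-hardness.

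For the hardness, the natural base problem is the single-source/single-target recoloring problem: given a graph $H$ and two $k$-colorings $\alpha,\beta$ of $H$, decide whether $\alpha$ can be transformed into $\beta$. Bonsma and Cereceda proved that this problem is PSPACE-complete for every $k\ge 4$ (and polynomial-time solvable for $k\le 3$, consistent with the threshold in the conjecture). The template, generalising the clique trick in the proof of Theorem~\ref{thm:4+mixing} and the ``hub'' mechanism behind Lemma~\ref{lem:3to2}, would be to build from $(H,\alpha,\beta)$ a graph $G$, on $k$ colors or possibly a few more, together with two distinguished colorings $c_\alpha,c_\beta$ of $G$ restricting to $\alpha$ and $\beta$ on a copy of $H$, so that: (i) every coloring of $G$ can be reconfigured to $c_\alpha$ or to $c_\beta$; (ii) no move of $G$, restricted to the copy of $H$, is illegal in $H$, and no move elsewhere changes the coloring of that copy; and (iii) the rest of $G$ offers no reconfiguration shortcut from ``$c_\alpha$'s side'' to ``$c_\beta$'s side''. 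Under~(i) the configuration graph of $G$ has at most two components, so $G$ is $k$-mixing if and only if $c_\alpha$ reaches $c_\beta$; by (ii)--(iii) this happens if and only if $\alpha$ reaches $\beta$ in $H$; hence $G$ is $k$-mixing if and only if $(H,\alpha,\beta)$ is a positive instance.

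The main obstacle is making~(i) hold \emph{unconditionally}, since for $k\ge 4$ the recoloring graph has no canonical hub. In Lemma~\ref{lem:3to2} the $2$-colorings of a bipartite graph play that role, but ``every $3$-coloring reaches one'' is exactly the co-NP-complete \textsc{$3$-To-$2$} problem: there it is the \emph{hard} part, whereas here we would want the analogous statement ``every coloring of $G$ reaches $c_\alpha$ or $c_\beta$'' to be true by construction, leaving as the only uncertainty whether the two sides join up. Two complementary routes suggest themselves: first, to start the reduction from a structured form of recoloring reachability in which the configuration graph of $H$ consists only of the components of $\alpha$ and of $\beta$ --- so that (i) is inherited almost for free --- and to argue that the known PSPACE-hardness reductions can be cast in this form; second, to give $G$ a spare color together with a funnelling gadget that collapses a generic coloring to a canonical configuration by Kempe-type moves, while a tightly guarded copy of $H$ blocks the funnel until its own reachability is resolved. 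Either way, the delicate part is the verification, simultaneously over \emph{all} colorings of $G$, that a negative instance does not slip into being mixing and that a positive instance is not disconnected by the machinery; the boundary case $k=4$, where there is the least room to manoeuvre, is an additional pressure point. We leave Conjecture~\ref{pspace} open.
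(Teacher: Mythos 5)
This statement is a conjecture that the paper explicitly leaves open (it is attributed to Cereceda, van den Heuvel and Johnson, and the paper's contribution is only the weaker co-NP-hardness of Theorem~\ref{thm:4+mixing}), so there is no proof in the paper to compare against, and you are right not to claim one. Your PSPACE-membership argument is correct and standard: enumerating pairs of colorings in polynomial space and deciding each reachability query by Savitch's theorem does place \textsc{$k$-Mixing} in PSPACE, which is worth noting since the paper only discusses (and leaves open) membership in co-NP. Your choice of base problem is also the natural one: single-pair recoloring reachability is indeed PSPACE-complete for every $k\ge 4$ by Bonsma and Cereceda, and your template (i)--(iii) is the obvious way to transfer that hardness to the connectivity question. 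The paper's own closing remarks point in a compatible direction: it conjectures that $4$-\textsc{To}-$3$ is PSPACE-complete and observes that hardness on $(k-1)$-chromatic instances would propagate to all $k'\ge k$ via the clique construction of Theorem~\ref{thm:4+mixing}, which matches the role your copy of $H$ plus ``hub'' plays.

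The genuine gap is exactly where you locate it, and it is not a small one: you give no construction achieving condition~(i), i.e.\ no gadget guaranteeing \emph{by construction} that every $k$-coloring of $G$ reaches $c_\alpha$ or $c_\beta$. This is the whole difficulty. For $k=3$ the analogous ``hub'' property (every $3$-coloring reaches a $2$-coloring) is itself co-NP-complete, and for $k\ge 4$ there is no known analogue of the $2$-colorings as a canonical target; moreover any funnelling mechanism strong enough to collapse arbitrary colorings of $G$ risks also collapsing the two sides into one component on negative instances, destroying condition~(iii). Your two proposed routes (restructuring the Bonsma--Cereceda reduction so its configuration graph has only two components, or adding a spare color with a guarded funnel) are plausible research directions but are stated at the level of intent, with no gadget, no invariant, and no verification over all colorings of $G$. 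As a survey of the obstruction your text is accurate and useful; as a proof it establishes only the membership half, and the conjecture remains open, as you say.
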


Note that if $k$-\textsc{Mixing} is PSPACE-complete for some $k$ then it does not imply that $k'$-\textsc{Mixing} is PSPACE-complete for $k' \ge k$. However, the proof of Theorem~\ref{thm:4+mixing} ensures that if $k$-\textsc{Mixing} restricted to $(k-1)$-chromatic graphs is PSPACE-complete, then the same holds for $k' \ge k$. We actually conjecture that the following holds:

\begin{conjecture}
$4$-\textsc{To}-$3$ is PSPACE-complete.
\end{conjecture}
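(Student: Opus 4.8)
Let us sketch a possible route to this conjecture. Here $4$-\textsc{To}-$3$ is the problem of deciding, given a $3$-colorable (say $3$-chromatic) graph $G$, whether \emph{every} $4$-coloring of $G$ can be transformed, by single vertex recolorings through proper colorings, into a $3$-coloring. \emph{Membership in PSPACE.} This should be routine. Since PSPACE $=$ coPSPACE, it suffices to put the complement in PSPACE. A \NO instance is exactly one admitting a $4$-coloring $c$ whose component in the configuration graph of the $4$-colorings of $G$ contains no $3$-coloring. Whether a \emph{given} $c$ can reach a $3$-coloring is a reachability question in a graph on $4^{|V(G)|}$ vertices with polynomial-size names, hence in NPSPACE $=$ PSPACE (guess the recoloring sequence step by step); so ``$c$ reaches no $3$-coloring'' is in PSPACE as well, and prefixing an existential quantifier over $c$ keeps us in NPSPACE $=$ PSPACE.

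\emph{PSPACE-hardness.} Here I would reduce from the reconfiguration of $4$-colorings between two prescribed colorings, which, by a theorem of Bonsma and Cereceda, is PSPACE-complete even on bipartite (hence $3$-colorable) graphs. Given such an instance $(H,\alpha,\beta)$, the plan is to build a $3$-chromatic graph $G$ containing a copy of $H$, together with a distinguished ``locked'' $4$-coloring $c_0$ agreeing with $\alpha$ on $H$, so that: (A) $c_0$ can reach a $3$-coloring of $G$ if and only if $\alpha$ and $\beta$ lie in the same component of the configuration graph of the $4$-colorings of $H$; and (B) every $4$-coloring of $G$ that is not reachable from $c_0$ can reach a $3$-coloring. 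For (A), one would graft onto the copy of $H$ a ``padlock'' gadget, attached so as not to obstruct the internal moves of $H$, which is frozen in $c_0$ and whose only way of releasing a color forces $H$ to be driven to $\beta$ first --- using a freezing device in the spirit of the proof of Theorem~\ref{thm:4+mixing} to keep the padlock rigid as long as $H$ stays away from $\beta$; once $H$ reaches $\beta$, a color class can be emptied and the whole graph collapses to a $3$-coloring by the clique-contraction and $O(n)$-step arguments already used there (see \cite{BonamyB18}). A minor modification should also guarantee that $G$ is genuinely $3$-chromatic. If (A) and (B) hold, then $G$ is a \YES instance of $4$-\textsc{To}-$3$ exactly when $\alpha$ and $\beta$ are in the same component, which completes the reduction.

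The step I expect to be the main obstacle is (B). The problem we reduce from concerns two \emph{fixed} colorings $\alpha,\beta$, whereas $4$-\textsc{To}-$3$ quantifies over \emph{all} $4$-colorings, so the reduction cannot be a mere relabeling: the construction must simultaneously plant the single hard coloring $c_0$ and guarantee that no \emph{spurious} frozen $4$-coloring is created anywhere else in $G$. Informally, the padlock must be ``self-unlocking'' under every perturbation that does not exactly match its unique locked pattern, no matter what $(H,\alpha,\beta)$ is. The rigidity arguments developed by Cereceda, van den Heuvel and Johnson \cite{CerecedaHJ09} for the co-NP-hardness of $3$-\textsc{To}-$2$ seem to be the right tool to make this robust, and checking (B) against an arbitrary embedded reconfiguration instance is where the real work lies.

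Finally, such a result would already settle Conjecture~\ref{pspace} for every $k \ge 5$: applying an analogue of the reduction of Theorem~\ref{thm:4+mixing} (adding a single vertex complete to the rest) to the constructed $3$-chromatic graph produces a $4$-chromatic graph that is $5$-mixing if and only if the original instance is positive, and the remark following Conjecture~\ref{pspace} then lifts PSPACE-completeness to all $k' \ge 5$. Obtaining $k = 4$ as well would further require the reduction to output graphs all of whose $3$-colorings are mutually reachable through $4$-colorings --- so that being $4$-mixing becomes equivalent to being a \YES instance of $4$-\textsc{To}-$3$ --- which one can plausibly arrange with a suitable gadget.
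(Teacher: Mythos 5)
The statement you are addressing is not a theorem of the paper but a conjecture it explicitly leaves open, so there is no proof to compare yours against; the question is whether your sketch closes the gap, and it does not. Your PSPACE-membership argument is fine (Savitch plus closure under complement is the standard route, and this part was never the issue). The hardness direction, however, is a specification rather than a construction: conditions (A) and (B) describe exactly what a reduction from $4$-colour reachability (Bonsma and Cereceda, not cited in this paper's bibliography) would have to achieve, but the ``padlock'' gadget that realizes them is never exhibited, and you yourself flag (B) as the main obstacle. The analogy with \cref{thm:4+mixing} is misleading here: the clique complete to $B$ in that proof freezes or unfreezes according to the \emph{number} of colours currently used on $B$, a global, permutation-invariant quantity, whereas your padlock must detect arrival at one \emph{specific} coloring $\beta$ of $H$. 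A gadget attached by edges to $H$ that is sensitive to the exact coloring of $H$ must be adjacent to essentially all of $H$, and such adjacencies add constraints that can destroy the reconfiguration behaviour of the embedded Bonsma--Cereceda instance; conversely, a gadget with limited attachment cannot distinguish $\beta$ from colorings agreeing with $\beta$ on its neighbourhood. Condition (B) is a universal statement over all $4$-colorings of $G$, and the rigidity theory of \cite{CerecedaHJ09} that you invoke classifies frozen structures for $3$-colorings of bipartite graphs; no analogue is known for $4$-colorings of $3$-chromatic graphs, which is precisely why this is a conjecture.

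Your closing paragraph is the soundest part: granting the conjecture, the lifting to \cref{pspace} for $k \ge 5$ via the construction of \cref{thm:4+mixing} and the remark following \cref{pspace} is plausible, though even there you would need an analogue of \cref{lem:3to2} for $4$-to-$3$, and the step that makes \cref{lem:3to2} work --- any two $2$-colorings of a connected bipartite graph share the same color classes --- has no counterpart for $3$-colorings of a $3$-chromatic graph, which is exactly the extra condition you impose for the $k=4$ case. In short: a reasonable research plan with the right target and the right awareness of where the difficulty lies, but not a proof.
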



\paragraph{Acknowledgments.}
The author would like to thank Laurent Feuilloley, Moritz Mühlenthaler and Théo Pierron for discussions about the problem.

\end{document}